\newtheorem{theorem}{Theorem}[section]
\newtheorem{proposition}[theorem]{Proposition}
\newtheorem{corollary}[theorem]{Corollary}
\theoremstyle{definition}
\newtheorem{definition}[theorem]{Definition}
\theoremstyle{remark}
\newtheorem{remark}[theorem]{Remark}
\numberwithin{equation}{section}
\begin{document}

\setcounter{page}{1}

\title[FPT on $G$-metric spaces]{ ($2k+1)^{th}$-order Fixed points in $G$-metric spaces}

\author[Ya\'e Ulrich Gaba]{Ya\'e Ulrich Gaba$^{1,2}$}
\author{COLLINS AMBURO AGYINGI$^{2}$}

\address{$^{1}$Institut de Math\'ematiques et de Sciences Physiques (IMSP)/UAC,
	Porto-Novo, B\'enin.}

\email{\textcolor[rgb]{0.00,0.00,0.84}{gabayae2@gmail.com
}}

\address{$^{2}$ Department of Mathematical Sciences, North West University, Private Bag
	X2046, Mmabatho 2735, South Africa.} 
\email{\textcolor[rgb]{0.00,0.00,0.84}{: collins.agyingi@nwu.ac.za
}}

\subjclass[2010]{Primary 47H05; Secondary 47H09, 47H10.}

\keywords{$G$-metric, fixed point, odd power type contraction mappings.}

\begin{abstract}
	
	We establish 
	three major fixed-point theorems for functions satisfying an odd  power type contractive condition in G-metric spaces.
	 We first consider the case of a single mapping, followed by that of a triplet of mappings and we conclude by the case of a family of mappings. The results we obtain in this
	 article extend similar ones already present in the literature. 
	
\end{abstract} 

\maketitle

\section{Introduction and Preliminaries}

$G$-metric fixed point theory is a growing area in the mathematical analysis because of its applications in areas like optimization theory and differential equations. It is worth pointing out that $G$-metrics appeared as a correction (provided by Mustafa\cite{Mustafa}) of the $D$-metric space theory introduced by Dhage\cite{d}. Indeed it was shown (see \cite{sim}) that certain theorems involving Dhage's $D$-metric spaces are flawed, and most of the results claimed by Dhage and others are invalid. In the literature, there are many results dealing with $G$-metric spaces and those can be read in \cite{Gaba7,r4,sha,v}. In the present manuscript, as in most papers dealing with $G$-metric fixed points, the key tool lies on the study
of the character of the sequence of iterates $\{T^nx\}_{n=0}^\infty$ (resp. $\{T_i(x_{i-1)}\}_{i=0}^\infty$) where $T: X \to X$ (resp.  $T_i: X \to X$ ), $x\in X$ and $X$ a complete $G$-metric
 space, is (resp. are) the map (resp. maps) under consideration.

Elementary facts about $G$-metric spaces as well as their properties related to can be found in \cite{Gaba7,Mustafa} and the references therein. We give here a summary of these requirements.
\begin{definition}\label{def1} (See \cite[Definition 3]{Mustafa})
Let $X$ be a nonempty set, and let the function $G:X\times X\times X \to [0,\infty)$ satisfy the following properties:
\begin{itemize}
\item[(G1)] $G(x,y,z)=0$ if $x=y=z$ whenever $x,y,z\in X$;
\item[(G2)] $G(x,x,y)>0$ whenever $x,y\in X$ with $x\neq y$;
\item[(G3)] $G(x,x,y)\leq G(x,y,z) $ whenever $x,y,z\in X$ with $z\neq y$;
\item[(G4)] $G(x,y,z)= G(x,z,y)=G(y,z,x)=\ldots$, (symmetry in all three variables);

\item[(G5)]
$$G(x,y,z) \leq [G(x,a,a)+G(a,y,z)]$$ for any points $x,y,z,a\in X$.
\end{itemize}
Then $(X,G)$ is called a \textbf{$G$-metric space}.

\end{definition}

\begin{definition}(See \cite{Mustafa})
	Let $(X, G)$ be a $G$-metric space, and let $\{x_n \}$ be a
	sequence of points of $X$, therefore, we say that $(x_n )$ is $G$-convergent to
	$x \in X$ if $\lim_{n,m\to \infty} G (x, x_n , x_m ) = 0,$ that is, for any $\varepsilon > 0$, there exists $N \in \mathbb{N}$ such that $G (x, x_n , x_ m ) < \varepsilon$, for all, $n, m \geq N$. We call $x$ the limit of the sequence and write $x_n \to x$ or $\lim_{n\to \infty} x_n = x$.
	
\end{definition}

\vspace*{0.2cm}

\begin{proposition}\label{prop1} (Compare \cite[Proposition 6]{Mustafa})
Let $(X,G)$ be a $G$-metric space. Define on $X$ the metric  $d_G$ by $d_G(x,y)= G(x,y,y)+G(x,x,y)$ whenever $x,y \in X$. Then for a sequence $(x_n) \subseteq X$, the following are equivalent
\begin{itemize}
\item[(i)] $(x_n)$ is $G$-convergent to $x\in X.$

\item[(ii)] $\lim_{n,m \to \infty}G(x,x_n,x_m)=0.$

\item[(iii)]  $\lim_{n \to \infty}d_G(x_n,x)=0$.

\item[(iv)]$\lim_{n \to \infty}G(x,x_n,x_n)=0.$ 

\item[(v)]$\lim_{n \to \infty}G(x_n,x,x)=0.$ 
\end{itemize}

\end{proposition}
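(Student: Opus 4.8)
The plan is to prove the web of equivalences by taking condition (ii) as the hub and connecting it to the three one-variable conditions (iii), (iv), (v), using the defining axioms (G3)--(G5) and, crucially, the full symmetry (G4). First I note that (i) and (ii) are literally the same statement by the definition of $G$-convergence, so nothing is needed there; the genuine work is to relate the double-limit condition (ii) to the single-variable conditions.

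Before the main argument I would record an auxiliary inequality valid in every $G$-metric space: for all $u,v\in X$ one has $G(u,u,v)\le 2\,G(u,v,v)$ and $G(u,v,v)\le 2\,G(u,u,v)$. Each follows from one application of the rectangle inequality (G5) combined with symmetry (G4); for instance, applying (G5) to $G(u,u,v)$ with auxiliary point $a=v$ gives $G(u,u,v)\le G(u,v,v)+G(v,u,v)$, and since $G(v,u,v)=G(u,v,v)$ by (G4), the first inequality follows, the second being analogous. This lemma is exactly what makes the passage between conditions that see the repeated coordinate as $x_n$ (namely (iv)) and those that see it as $x$ (namely (v)) routine.

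With this in hand the easy implications fall out. Setting $m=n$ in (ii) restricts the double sequence to its diagonal and yields (iv) at once. For the equivalence of (iv) and (v), I observe that $G(x_n,x,x)=G(x,x,x_n)$ by (G4), so (v) says $G(x,x,x_n)\to 0$ while (iv) says $G(x,x_n,x_n)\to 0$; the auxiliary inequality with $u=x$, $v=x_n$ shows each forces the other. The role of $d_G$ is then transparent: since $d_G(x_n,x)=G(x_n,x,x)+G(x_n,x_n,x)$ and $G(x_n,x_n,x)=G(x,x_n,x_n)$ by (G4), the distance $d_G(x_n,x)$ is the sum of the nonnegative quantities in (v) and (iv). Hence (iii) holds precisely when both (iv) and (v) hold, and since those are equivalent, we get (iii)$\Leftrightarrow$(iv).

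The main obstacle, and the only step using (G5) in its full strength, is closing the loop by deducing the double limit (ii) from the single conditions. Here I would decompose, via (G5) with auxiliary point $a=x_n$, as $G(x,x_n,x_m)\le G(x,x_n,x_n)+G(x_n,x_n,x_m)$, so the first summand tends to $0$ by (iv); the remaining term I would estimate using (G4) to write $G(x_n,x_n,x_m)=G(x_m,x_n,x_n)$ and then (G5) again with auxiliary point $a=x$ to get $G(x_m,x_n,x_n)\le G(x_m,x,x)+G(x,x_n,x_n)$, whose summands vanish as $n,m\to\infty$ by (v) and (iv) respectively. Combining these shows $G(x,x_n,x_m)\to 0$, which is (ii), and together with (i)$\Leftrightarrow$(ii) this completes the cycle. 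The one point demanding constant care is the bookkeeping of which coordinate is repeated, since (G4) permutes the three entries but does not interchange the pattern with two equal leading entries and that with two equal trailing entries except through the auxiliary inequality.
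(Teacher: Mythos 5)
Your proof is correct; the paper itself states this proposition without proof (citing Mustafa--Sims), and your argument is the standard one from that source: identifying (i) with (ii) by definition, relating the two one-variable conditions via the inequalities $G(u,u,v)\le 2G(u,v,v)$ and $G(u,v,v)\le 2G(u,u,v)$ obtained from (G4) and (G5), reading (iii) off as the sum of the quantities in (iv) and (v), and recovering the double limit (ii) by two applications of (G5). No gaps; the logical cycle closes since (iv) and (v) are equivalent and together imply (ii).
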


\vspace*{0.2cm}

\begin{definition}(See \cite{Mustafa})
	Let $(X, G)$ be a $G$-metric space. A sequence $\{x_n \}$ is
	called a $G$-Cauchy sequence if for any $\varepsilon > 0$, there is $N \in \mathbb{N}$ such that
	$G (x_n , x_m , x_l ) < \varepsilon$ for all $n, m, l \geq N$, that is $G (x_n , x_m , x_l ) \to 0$ as $n, m,l \to +\infty$.
\end{definition}

\vspace*{0.2cm}

\begin{proposition}(Compare \cite[Proposition 9]{Mustafa})

In a $G$-metric space $(X,G)$, the following are equivalent
\begin{itemize}
\item[(i)] The sequence $(x_n) \subseteq X$ is $G$-Cauchy.

\item[(ii)] For each $\varepsilon >0$ there exists $N \in \mathbb{N}$ such that $G(x_n,x_m,x_m)< \varepsilon$ for all $m,n\geq N$.

\end{itemize}

\end{proposition}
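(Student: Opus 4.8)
The plan is to prove the two implications separately, relying on the rectangle inequality (G5) and the full symmetry (G4) of the $G$-metric. The implication (i) $\Rightarrow$ (ii) is immediate: if $(x_n)$ is $G$-Cauchy, then given $\varepsilon > 0$ there is $N \in \mathbb{N}$ with $G(x_n, x_m, x_l) < \varepsilon$ for all $n, m, l \geq N$; specializing $l = m$ yields $G(x_n, x_m, x_m) < \varepsilon$ for all $n, m \geq N$, which is exactly (ii).

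For the nontrivial implication (ii) $\Rightarrow$ (i), I would fix $\varepsilon > 0$ and use (ii) to choose $N \in \mathbb{N}$ so that $G(x_p, x_q, x_q) < \varepsilon/2$ for all $p, q \geq N$. The goal is to control the general term $G(x_n, x_m, x_l)$ for arbitrary $n, m, l \geq N$. Applying the rectangle inequality (G5) with the auxiliary point $a = x_m$ gives
$$G(x_n, x_m, x_l) \leq G(x_n, x_m, x_m) + G(x_m, x_m, x_l).$$
The first summand is already exactly of the form appearing in (ii) and is therefore $< \varepsilon/2$. For the second summand I would invoke the symmetry (G4) to rewrite $G(x_m, x_m, x_l) = G(x_l, x_m, x_m)$, which is again of the form demanded by (ii) and hence $< \varepsilon/2$. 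Adding the two estimates yields $G(x_n, x_m, x_l) < \varepsilon$ for all $n, m, l \geq N$, which is precisely the statement that $(x_n)$ is $G$-Cauchy.

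The only subtle point — and the step I expect to be the main (mild) obstacle — is the bookkeeping in the second summand: the term produced by (G5) is $G(x_m, x_m, x_l)$, whose repeated point $x_m$ occupies the first two slots, whereas (ii) is phrased with the repeated point in the last two slots. The symmetry axiom (G4), which guarantees invariance of $G$ under every permutation of its arguments, is exactly what licenses the rearrangement $G(x_m, x_m, x_l) = G(x_l, x_m, x_m)$ so that (ii) can be applied with $p = l$ and $q = m$. One should also confirm that the choice $a = x_m$ keeps every index appearing in the two bounding terms above the threshold $N$, which is automatic here since $n, m, l \geq N$; were one instead to split with a different auxiliary point, the same care with symmetry and with the index ranges would be needed.
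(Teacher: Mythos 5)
Your argument is correct and complete: (i) $\Rightarrow$ (ii) by specializing $l=m$, and (ii) $\Rightarrow$ (i) by the rectangle inequality (G5) with $a=x_m$, giving $G(x_n,x_m,x_l)\leq G(x_n,x_m,x_m)+G(x_m,x_m,x_l)$, each summand controlled via (ii) after using the symmetry (G4) on the second. The paper itself supplies no proof of this proposition --- it is quoted from Mustafa--Sims --- so there is nothing to compare against; your argument is the standard one and fills that gap correctly.
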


\vspace*{0.2cm}

\begin{definition} (Compare \cite[Definition 4]{Mustafa})
 A $G$-metric space $(X,G)$ is said to be symmetric if 
 $$G(x,y,y) = G(x,x,y), \ \text{for all } x,y \in X.$$

\end{definition}

\vspace*{0.2cm}

\begin{definition} (Compare \cite[Definition 9]{Mustafa})
 A $G$-metric space $(X,G)$ is $G$-complete if every $G$-Cauchy sequence of elements of $(X,G)$ is $G$-convergent in  $(X,G)$. 

\end{definition}

\begin{theorem}\label{theorem1}(See \cite{Mustafa})
	A $G$-metric $G$ on a $G$-metric space $(X, G)$ is continuous on its three
	variables.
\end{theorem}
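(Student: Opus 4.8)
The plan is to prove the stronger statement of \emph{joint} continuity: whenever $x_n \to x$, $y_n \to y$ and $z_n \to z$ in $(X,G)$, one has $G(x_n,y_n,z_n) \to G(x,y,z)$. Continuity of $G$ in each variable separately then follows immediately by freezing two of the three sequences at constant values. The engine of the argument is a quantitative ``reverse triangle'' estimate that controls the variation of $G$ under a simultaneous perturbation of all three entries.

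First I would peel off the entries one at a time, using (G5) together with the full symmetry (G4). Applying (G5) to replace the first entry gives $G(x,y,z)\le G(x,x',x')+G(x',y,z)$; rewriting $G(x',y,z)$ by (G4) and applying (G5) again to replace the second entry, then once more for the third entry, yields
\begin{equation*}
G(x,y,z)\le G(x,x',x')+G(y,y',y')+G(z,z',z')+G(x',y',z').
\end{equation*}
Interchanging the roles of the primed and unprimed points produces the companion bound in which $G(x',x,x)$, $G(y',y,y)$, $G(z',z,z)$ appear on the right. Together these give the two-sided control of $G(x,y,z)-G(x',y',z')$ by the three ``$x$-to-$x'$'' type error terms above and below.

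Then I would specialize $x'=x_n$, $y'=y_n$, $z'=z_n$ and let $n\to\infty$. By Proposition~\ref{prop1}, items (iv) and (v), the $G$-convergence $x_n\to x$ forces both $G(x,x_n,x_n)\to 0$ and $G(x_n,x,x)\to 0$, and likewise for the $y$- and $z$-sequences. Hence every error term occurring in \emph{both} of the bounds tends to $0$, so $G(x_n,y_n,z_n)-G(x,y,z)\to 0$, which is exactly the claimed joint continuity.

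The main obstacle is the bookkeeping forced by the asymmetry of $G$: since $(X,G)$ need not be symmetric, the quantities $G(x,x',x')$ and $G(x',x,x)$ are in general distinct, so a single one-sided inequality will not close the argument. The remedy is to derive \emph{both} directions of the estimate and to invoke \emph{both} equivalent forms (iv) and (v) of $G$-convergence from Proposition~\ref{prop1}. Once this symmetrization is arranged, the repeated applications of (G5) and the permutation identities (G4) are entirely routine.
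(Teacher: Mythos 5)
Your proof is correct: the two-sided estimate obtained from three applications of (G5) (after cycling entries into the first slot via (G4)), together with the equivalent characterizations (iv) and (v) of $G$-convergence in Proposition~\ref{prop1}, does give joint continuity, which implies the stated claim. The paper itself supplies no proof of this theorem --- it only cites \cite{Mustafa} --- and your argument is essentially the standard one found there, so there is nothing to compare beyond noting that your handling of the possible asymmetry of $G$ (using both one-sided bounds) is exactly the right care to take.
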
 

\vspace*{0.2cm}

We conclude this introductory part with:

\vspace*{0.2cm}

\begin{definition}\label{vetro}(Compare \cite[Definition 2.1]{v})
	For a sequence $(a_n)_{n\geq 1}$ of nonnegative real numbers, the series $\sum_{n=1}^{\infty}a_n$ is an $\alpha$-series if there exist $0<\lambda<1$ and $n(\lambda) \in \mathbb{N}$ such that $$\sum_{i=1}^{L} a_i \leq \lambda L \text{ for each } L\geq n(\lambda).$$
	
\end{definition}

\begin{definition}(Compare \cite[Definition 6]{Gaba1})
	A sequence $(x_n)_{n\geq 1}$ in a $G$-metric space $(X,G)$ is a $\lambda$-sequence if there exist $0<\lambda<1$ and $n(\lambda) \in \mathbb{N}$ such that $$\sum_{i=1}^{L-1} G(x_i,x_{i+1},x_{i+1}) \leq \lambda L \text{ for each } L\geq n(\lambda)+1.$$
	
\end{definition}

\newpage

\section{The results}

\begin{theorem}\label{thmk}
	Let $(X, G)$ be a complete $G$-metric space. Suppose the map $T : X \to X$ satisfies
	
	\begin{equation}\label{condthmk}
	G^{2k+1}(T x, T y, T z) \leq  \lambda G(x, T x, T x)G^k(y, T y, T y)G^k(z, T z, T z)
	\end{equation}
	for all $x, y, z \in X$, where $0 \leq \lambda < 1$, and some $k \in \mathbb{N}$. Then $T$ has a unique fixed point.
\end{theorem}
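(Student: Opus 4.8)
The plan is to run the classical Picard-iteration argument, fixing an arbitrary $x_0 \in X$, setting $x_{n+1} = Tx_n$, and analysing the sequence of iterates $\{x_n\}$. Writing $a_n := G(x_n, x_{n+1}, x_{n+1})$, the first and decisive step is to extract a self-improving scalar inequality for the $a_n$. I would specialize the contractive condition \eqref{condthmk} to the asymmetric triple $(x,y,z) = (x_{n-1}, x_n, x_n)$ and use $Tx_i = x_{i+1}$, obtaining
$$a_n^{2k+1} = G^{2k+1}(x_n, x_{n+1}, x_{n+1}) \leq \lambda\, G(x_{n-1}, x_n, x_n)\, G^{2k}(x_n, x_{n+1}, x_{n+1}) = \lambda\, a_{n-1}\, a_n^{2k}.$$
If $a_n = 0$ for some $n$, then $x_n = x_{n+1} = Tx_n$ by (G1)--(G2) and $x_n$ is already a fixed point; otherwise I divide by $a_n^{2k} > 0$ to get $a_n \leq \lambda a_{n-1}$, and hence $a_n \leq \lambda^n a_0$ for every $n$ by induction.

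The second step is to upgrade this geometric decay to the $G$-Cauchy property. Using the rectangle inequality (G5) repeatedly, for $m > n$ I would estimate
$$G(x_n, x_m, x_m) \leq \sum_{i=n}^{m-1} G(x_i, x_{i+1}, x_{i+1}) = \sum_{i=n}^{m-1} a_i \leq \frac{\lambda^n}{1-\lambda}\, a_0,$$
which tends to $0$ as $n \to \infty$. By the characterization of $G$-Cauchy sequences (the proposition comparing \cite[Proposition 9]{Mustafa}), $\{x_n\}$ is $G$-Cauchy, so completeness of $(X,G)$ supplies a limit $u$ with $x_n \to u$.

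The third step identifies $u$ as a fixed point. Here I would apply \eqref{condthmk} to $(x,y,z) = (u, x_n, x_n)$, yielding
$$G^{2k+1}(Tu, x_{n+1}, x_{n+1}) \leq \lambda\, G(u, Tu, Tu)\, G^{2k}(x_n, x_{n+1}, x_{n+1}) = \lambda\, G(u,Tu,Tu)\, a_n^{2k}.$$
Since $a_n \to 0$ while the factor $G(u, Tu, Tu)$ is independent of $n$, the right-hand side vanishes in the limit; invoking the continuity of $G$ (Theorem \ref{theorem1}) together with $x_{n+1} \to u$ on the left forces $G(Tu, u, u) = 0$, whence $Tu = u$ by (G1)--(G2). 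Uniqueness then follows by feeding two fixed points $u,v$ into \eqref{condthmk} through $(x,y,z) = (u,v,v)$: the right-hand side carries the factor $G(u,Tu,Tu) = G(u,u,u) = 0$, so $G(u,v,v) = 0$ and $u = v$.

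The main obstacle I anticipate is the very first step: recognizing that the superficially symmetric three-variable hypothesis must be broken \emph{asymmetrically}, with a single ``$x$-type'' factor against two ``$y,z$-type'' factors, precisely so that $2k$ of the $2k+1$ powers of $a_n$ cancel and leave the clean contraction $a_n \leq \lambda a_{n-1}$. The degenerate case $a_n = 0$ must be isolated before that division, and the passage to the limit in the third step genuinely relies on the joint continuity of $G$. Everything downstream of the scalar inequality is then routine.
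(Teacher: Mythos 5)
Your proposal is correct and follows essentially the same route as the paper: Picard iteration, the asymmetric specialization $(x,y,z)=(x_{n-1},x_n,x_n)$ to cancel $2k$ powers and obtain $a_n\leq\lambda a_{n-1}$, the geometric-series estimate via (G5) for the $G$-Cauchy property, a limit passage in the contractive inequality to identify the fixed point, and the vanishing factor $G(u,Tu,Tu)=0$ for uniqueness. Your explicit treatment of the degenerate case $a_n=0$ before dividing is in fact slightly more careful than the paper, which simply assumes the iterates are pairwise distinct.
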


\begin{proof}
	Let $x_0\in X$ be arbitrary and construct the sequence $\{x_n\}$ such that $x_{n+1}=Tx_n.$ Moreover, we may assume, without loss of generality that $x_n\neq x_{m}$ for $n\neq m$.
	
	For the triplet $(x_n, x_{n+1},x_{n+1})$, and by setting $d_n = G(x_n, x_{n+1},x_{n+1})$, we have:
	
	\begin{align*}
	G^{2k+1}(Tx_{n-1}, Tx_{n},Tx_{n}) & = G^{2k+1}(x_n,x_{n+1},x_{n+1})\\
	&\leq \lambda G(x_{n-1},x_n,x_n) G^k(x_{n},x_{n+1},x_{n+1}) G^k(x_{n},x_{n+1},x_{n+1}).
	\end{align*}
Thus, we have	
	\[ G(x_n, x_{n+1},x_{n+1}) \leq \lambda G(x_{n-1},x_n,x_n) \leq \ldots \leq  \lambda^n G(x_0 , x_1 , x_1 ).  \]
	
Hence for any $m>n,$ we have

\begin{align*}
G(x_n,x_m,x_m) & \leq G(x_n,x_{n+1},x_{n+1})+ G(x_{n+1},x_{n+2},x_{n+2})\\
& +\cdots+  G(x_{m-1},x_{m},x_m)\\
& \leq (q^n + \cdots + q^{m-1})G(x_0 , x_1 , x_1 ) \\
& \leq  \frac{\lambda^{n}}{1-\lambda}G(x_0 , x_1 , x_1 ).
\end{align*}
	
and so $G(x_n,x_m,x_m)\to 0$, as $n, m \to \infty$. Thus $\{x_n \}$ is $G$-Cauchy sequence. Due to the
completeness of $(X, G)$, there exists $u \in X$ such that $\{x_n \}$ $G$-converges to $u$.	
	
On the other hand, using \eqref{condthmk}, we have
\[ G^{2k+1}(Tx_{n-1}, Tx_{n-1},Tu)
\leq qG(x_{n-1}, x_n , x_n ) G^k(x_{n-1}, x_n , x_n )G^k(u, T u, T u)\]

Letting $n \to \infty$, and using the fact that $G$ is continuous on its variable, we get that $G^{2k+1} (u, u, T u) = 0$. Therefore, $T u = u$, hence $u$ is a fixed point of $T$. If $v$ a fixed point of
$T$, then we have

\begin{align*}
G^{2k+1}(u, u, v) &= G^{2k+1}(T u, T u, T v) \\
& \leq  qG(u, T u, T u)G^k(u, T u, T u)G^k(v, T v, T v)\\
& = 0.
\end{align*}
Thus, $u = v$. Now we know the fixed point of $T$ is unique.

\end{proof}

\newpage

\begin{corollary}Let $(X, G)$ be a complete $G$-metric space. Suppose the map $T : X \to X$ satisfies
	
	\begin{equation}\label{condthmkcor}
	G^{2k+1}(T^p x, T^py, T^pz) \leq  qG(x, T^p x, T^p x)G^k(y, T^p y, T^p y)G^k(z, T^p z, T^p z)
	\end{equation}
	for all $x, y, z \in X$, where $0 \leq \lambda < 1$, and some $p,k \in \mathbb{N}$. Then $T$ has a unique fixed point.
\end{corollary}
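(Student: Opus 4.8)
The plan is to reduce the corollary to Theorem \ref{thmk} by applying it to the $p$-fold iterate $S := T^p$. Observe that hypothesis \eqref{condthmkcor} is exactly the contractive condition \eqref{condthmk} of Theorem \ref{thmk} written for the map $S$ in place of $T$: substituting $T \mapsto S$ in \eqref{condthmk} yields
\begin{equation*}
G^{2k+1}(Sx, Sy, Sz) \leq q\, G(x, Sx, Sx)\, G^{k}(y, Sy, Sy)\, G^{k}(z, Sz, Sz),
\end{equation*}
which is precisely \eqref{condthmkcor}. Since $(X,G)$ is complete, Theorem \ref{thmk} applies verbatim to $S$ and guarantees that $S = T^{p}$ has a unique fixed point, say $u \in X$; that is, $T^{p}u = u$.

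The second step is to upgrade $u$ from a fixed point of $T^{p}$ to a fixed point of $T$ itself. For this I would use the standard commuting argument: applying $T$ to both sides of $T^{p}u = u$ gives $T^{p}(Tu) = T(T^{p}u) = Tu$, so that $Tu$ is also a fixed point of $S = T^{p}$. By the uniqueness just established, $Tu = u$, hence $u$ is indeed a fixed point of $T$.

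Finally, for uniqueness of the fixed point of $T$, suppose $v$ satisfies $Tv = v$; then iterating gives $T^{p}v = v$, so $v$ is a fixed point of $S$, and uniqueness of the fixed point of $S$ forces $v = u$. Thus $T$ has a unique fixed point. I do not expect any genuine obstacle here, since the result is a routine corollary: the only point requiring a little care is the passage from the fixed-point set of $T^{p}$ to that of $T$, which is exactly where the iterate-commuting step above is needed, while everything else is an immediate specialization of the theorem.
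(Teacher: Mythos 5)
Your proposal is correct and follows essentially the same route as the paper: apply Theorem \ref{thmk} to $S = T^{p}$, then use $T^{p}(Tu) = T(T^{p}u) = Tu$ together with the uniqueness of the fixed point of $T^{p}$ to conclude $Tu = u$. The only addition is your explicit final paragraph on the uniqueness of the fixed point of $T$, which the paper leaves implicit but which is an immediate consequence of the same uniqueness for $T^{p}$.
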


\begin{proof}
From Theorem \ref{thmk} we know that  $T^p$ has a unique fixed point (say $\xi$), that is,
$T^p \xi = \xi$. Since $T\xi = T T^p \xi = T^{p+1} \xi = T^p T \xi$, so $T \xi$ is
another fixed point for $T^p$ , and by uniqueness, we have $T \xi = \xi$.
\end{proof}

 \begin{theorem}\label{thm1}
 Let $(X, G)$ be a complete $G$-metric space. Suppose the maps $T,P,Q : X \to X$ satisfy
 
 \begin{equation}\label{condthm1}
 G^3(T x, P y, Q z) \leq  \lambda G(x, T x, T x)G(y, P y, P y)G(z, Q z, Q z)
 \end{equation}
 for all $x, y, z \in X$, where $0 \leq \lambda < 1$. Then $T,P$ and $Q$ have a unique common fixed point.
 \end{theorem}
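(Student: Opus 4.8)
The plan is to follow the template of Theorem \ref{thmk} as closely as possible, but with a \emph{cyclic} Picard iteration. Fix $x_0 \in X$ and define $x_{3n+1} = Tx_{3n}$, $x_{3n+2} = Px_{3n+1}$, $x_{3n+3} = Qx_{3n+2}$ for $n \ge 0$, so that the three maps are applied in rotation. As in the single-map case I would first dispose of the degenerate situation: if $x_{n+1}=x_n$ for some $n$, then one of $T,P,Q$ fixes $x_n$, and the elementary observation recorded below shows that $x_n$ is already a common fixed point; so I may assume the iterates are pairwise distinct. The remaining goal is to prove that $\{x_n\}$ is $G$-Cauchy, identify its limit $u$, verify $Tu=u$, and then upgrade this to $Tu=Pu=Qu=u$ together with uniqueness.

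For the Cauchy step, write $t_i = G(x_i,x_{i+1},x_{i+1})$ and $D_i = G(x_i,x_{i+1},x_{i+2})$. Applying \eqref{condthm1} to the triple $(x_{3n},x_{3n+1},x_{3n+2})$ turns its left-hand side into $G^3(x_{3n+1},x_{3n+2},x_{3n+3}) = D_{3n+1}^3$ and its right-hand side into $\lambda\, t_{3n} t_{3n+1} t_{3n+2}$; rotating the roles of $T,P,Q$ and using the symmetry (G4) to reorder arguments produces the same inequality $D_m^3 \le \lambda\, t_{m-1} t_m t_{m+1}$ for every $m \ge 1$. The delicate point — the one place where a thoughtless estimate wrecks the argument — is how to trade the edge-distances $t_i$ for the three-point distances $D_j$. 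The obvious bound $t_i \le D_i$ coming from (G3) yields only $D_m^2 \le \lambda D_{m-1} D_{m+1}$, a log-convex relation that in fact forces the $D_m$ to \emph{grow}. Instead I would apply (G4) then (G3) in the form $t_i = G(x_{i+1},x_{i+1},x_i) \le G(x_{i+1},x_i,x_{i-1}) = D_{i-1}$, i.e. bound each edge-distance by the three-point distance one step \emph{earlier}. This turns the cubic inequality into $D_m^2 \le \lambda\, D_{m-2} D_{m-1}$.

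From this recursion geometric decay is routine: setting $c = \lambda^{1/3}$ and $u_m = D_m/c^m$, the relation becomes $u_m^2 \le u_{m-2}u_{m-1}$, so a two-step induction gives $u_m \le \max\{u_0,u_1\}$ for all $m$, whence $D_m \le C\lambda^{m/3}$ with $C$ depending only on $D_0,D_1$. Since $t_i \le D_i \le C\lambda^{i/3}$, the telescoping estimate from (G5), namely $G(x_n,x_m,x_m) \le \sum_{i=n}^{m-1} t_i \le C\lambda^{n/3}/(1-\lambda^{1/3})$, shows $\{x_n\}$ is $G$-Cauchy, and completeness provides a limit $u$.

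It remains to identify $u$ as the common fixed point. The useful observation is that \emph{any} fixed point of one of the maps is automatically common: putting $x=y=z=w$ in \eqref{condthm1} gives $G^3(Tw,Pw,Qw) \le \lambda\, G(w,Tw,Tw)G(w,Pw,Pw)G(w,Qw,Qw)$, so if, say, $Tw=w$ then the first factor on the right is $G(w,w,w)=0$, forcing $Tw=Pw=Qw$ and hence $Pw=Qw=w$. To reach $Tu=u$ I would apply \eqref{condthm1} to $(u,x_{3n+1},x_{3n+2})$, so that the left-hand side is $G^3(Tu,x_{3n+2},x_{3n+3})$ while the right-hand side carries the factors $t_{3n+1},t_{3n+2}\to 0$; letting $n\to\infty$, using $x_{3n+2},x_{3n+3}\to u$ and the continuity of $G$ (Theorem \ref{theorem1}), gives $G^3(Tu,u,u)=0$, i.e. $Tu=u$, and the observation upgrades this to $Tu=Pu=Qu=u$. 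Uniqueness is immediate: if $v$ is another common fixed point then $G^3(u,u,v)=G^3(Tu,Pu,Qv)\le \lambda\, G(u,Tu,Tu)\cdots = 0$. The main obstacle throughout is the decay step of the second paragraph — committing to the index-shifted estimate $t_i \le D_{i-1}$ rather than $t_i \le D_i$ — since this is exactly the feature that comes for free in the single-map proof, where collapsing $y=z$ already makes the left-hand side an edge-distance, but which must be engineered by hand once $T,P,Q$ are genuinely distinct.
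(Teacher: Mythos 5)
Your proposal is correct and follows essentially the same route as the paper: the cyclic Picard iteration, a geometric contraction on $D_m = G(x_m,x_{m+1},x_{m+2})$, the telescoping (G5) estimate, and the same limit and uniqueness computations. The only point of divergence is the decay step, which the paper leaves implicit: one can bound two of the three edge factors by $D_m$ itself so that $D_m^3 \le \lambda D_{m-1}D_m^2$ cancels to $D_m \le \lambda D_{m-1}$, whereas your uniformly shifted bound $t_i \le D_{i-1}$ gives the two-step relation $D_m^2 \le \lambda D_{m-2}D_{m-1}$ and needs the extra normalization $u_m = D_m/\lambda^{m/3}$ --- both are valid, and your explicit treatment (together with the observation that a fixed point of any one of $T,P,Q$ is automatically common, which also justifies the ``distinct iterates'' assumption) fills gaps the paper glosses over.
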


 \begin{proof}
 Let $x_0 \in X$, and define the sequence $\{x_n\}$ by 
 \[
 x_1 = Tx_0, x_2 = Px_1, x_3 = Qx_2 = \cdots,\]
 
 \[ x_{3n+1}= Tx_{3n}, \ \ x_{3n+2} = Px_{3n+1}, \ \ x_{3n+3}= Qx_{3n+2}.
 \]
 
 Using \eqref{condthm1} and assuming, without loss of generality, that $x_n\neq x_m$ for each $n\neq m$, we have

 \begin{align*}
 	G^3(T x_0, P x_1, Q x_2) &=G^3(x_1,x_2,x_3) \\ 
 	                        & \leq 
 \end{align*}
 \[  \leq  \lambda G(x, T x, T x)G(y, P y, P y)G(z, Q z, Q z)  \]
 
 We then conclude that

 \[
 \boxed{G(x_{3n+1},x_{3n+2},x_{3n+3}) \leq  \lambda\  G(x_{3n},x_{3n+1}, x_{3n+2})}.
 \]
 
 In the same manner, it can be shown that 
 
 \[
 \boxed{G(x_{3n+2},x_{3n+3},x_{3n+4}) \leq  \lambda \ G(x_{3n+1},x_{3n+2}, x_{3n+3})} ,
 \]
 
 and 
 \[
 \boxed{G(x_{3n},x_{3n+1},x_{3n+2}) \leq  \lambda \ G(x_{3n-1},x_{3n},x_{3n+1})},
 \]
 so that 
 \begin{align*}
 G(x_{3n},x_{3n+1},x_{3n+2}) \leq & \ \lambda^{3n} G(x_0,x_1,x_2), \\
 G(x_{3n+1},x_{3n+2},x_{3n+3}) \leq & \  \lambda^{3n}G(x_1,x_2,x_3), \\
 G(x_{3n+2},x_{3n+3},x_{3n+4}) \leq & \ \lambda^{3n}G(x_2,x_3,x_4).
 \end{align*}

 Therefore, for all $n$
 \[
 \boxed{G(x_{n},x_{n+1},x_{n+2}) \ \leq \ \lambda^{n} \ r(x_0)},
 \]
 
 with $$r(x_0)=\max\{G(x_0,x_1,x_2),G(x_1,x_2,x_3),G(x_2,x_3,x_4) \}.$$

 Hence for any $l>m>n,$ we have

 \begin{align*}
 G(x_n,x_m,x_m) & \leq G(x_n,x_{n+1},x_{n+1})+ G(x_{n+1},x_{n+2},x_{n+2})\\
 & +\cdots+  G(x_{l-1},x_{l-1},x_l)\\
 & \leq G(x_n,x_{n+1},x_{n+2})+ G(x_{n+1},x_{n+2},x_{n+3})\\
 & +  \cdots+  G(x_{l-2},x_{l-1},x_l)\\
 & \leq  \frac{\lambda^{n}}{1-\lambda}r(x_0).
 \end{align*}

 Similarly, for the cases  $l=m>n,$  and  $l>m=n,$ we have 
 
 \[
 G(x_n,x_m,x_l)  \leq \frac{\lambda^{n-1}}{1-\lambda}r(x_0).
 \]
 
 Thus is $\{x_n\}$ is $G$-Cauchy and hence $G$-converges. Call the limit $\xi.$ From \eqref{condthm1}, we have 
 
 \begin{equation}\label{lim1}
 G(T\xi,x_{3n+2},x_{3n+3})\leq \lambda G(\xi,T\xi,T\xi) G(x_{3n+1},x_{3n+2},x_{3n+2})G(x_{3n+2},x_{3n+3},x_{3n+3})
 \end{equation}
 
 \vspace*{0.5cm}
 
 Taking the limit in \eqref{lim1} as $n\to \infty$, and
 using the fact that the function $G$ is continuous, we obtain
 \[
 G(T\xi, \xi,\xi) \leq 0,
 \]

 that is $ G(T\xi, \xi,\xi)=0$ and hence $T\xi=\xi.$
Similarly, one shows that 
\[
P\xi = \xi = Q\xi.
\]

Moreover, if $\eta$ is a point such that 

\[
\eta= T\eta =P\eta =Q\eta,
\] 
  then from \eqref{condthm1}, we can write
  \[
  G^3(T\xi,P\eta,x_{3n+3}) \leq \lambda G(\xi,T\xi,T\xi)G(\eta,P\eta,P\eta)G(x_{3n+2},x_{3n+3}).
  \]
 Taking the limit, we obtain, 
 \[ \xi=T\xi=P\eta =\eta.   \] 
   Similarly, one gets $\xi=T\xi=Q\eta =\eta$. 
   This completes the proof.
  
  \end{proof}

In the same manner, one easily establish the following:

\begin{theorem}\label{thm2}
	Let $(X, G)$ be a complete $G$-metric space. Let $T_i : X \to X, \ i=1,2,\cdots$ be a family of maps that satisfy
	
	\begin{equation}\label{condthm2}
	G^3(T_ix, T_j y, T_k z) \leq  qG(x, T_i x, T_i x)G(y, T_j y, T_j y)G(z, T_k z, T_k z)
	\end{equation}
	for all $x, y, z \in X$, where $0 \leq q < 1$. Then the $T_i$'s, $i=1,2,\cdots,$ have a unique common fixed point.
\end{theorem}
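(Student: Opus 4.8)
The plan is to mirror the one-map argument of Theorem \ref{thmk}, replacing the single iteration $x_{n+1}=Tx_n$ by the construction $x_n = T_n x_{n-1}$, which feeds the sequence through the whole family. Fixing an arbitrary $x_0\in X$ and writing $d_n = G(x_n, x_{n+1}, x_{n+1})$, I would first assume without loss of generality that consecutive iterates are distinct (otherwise a fixed point appears immediately). Applying \eqref{condthm2} to the triple $(T_n x_{n-1}, T_{n+1}x_n, T_{n+1}x_n)=(x_n, x_{n+1}, x_{n+1})$ with the index pattern $(n, n+1, n+1)$ gives
\[
d_n^3 = G^3(x_n, x_{n+1}, x_{n+1}) \leq q\, G(x_{n-1}, x_n, x_n)\, G(x_n, x_{n+1}, x_{n+1})^2 = q\, d_{n-1}\, d_n^2,
\]
and cancelling $d_n^2>0$ yields the contraction estimate $d_n \leq q\, d_{n-1}$, hence $d_n \leq q^n d_0$. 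The crucial point is that \eqref{condthm2} is postulated for every choice of indices, so the pattern $(n,n+1,n+1)$ is legitimate even though the map changes at each step.

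Next I would show that $\{x_n\}$ is $G$-Cauchy exactly as in Theorem \ref{thmk}: for $m>n$, repeated use of (G5) telescopes $G(x_n, x_m, x_m)$ into $\sum_{i=n}^{m-1} d_i \leq \sum_{i=n}^{m-1} q^i d_0 \leq \frac{q^n}{1-q}\, d_0$, which tends to $0$. Completeness of $(X,G)$ then produces a limit $\xi$ with $x_n \to \xi$.

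To identify $\xi$ as a common fixed point I would fix an arbitrary index $i$ and apply \eqref{condthm2} to $(T_i\xi, T_{n+1}x_n, T_{n+1}x_n)$ with the index pattern $(i, n+1, n+1)$, obtaining
\[
G^3(T_i\xi, x_{n+1}, x_{n+1}) \leq q\, G(\xi, T_i\xi, T_i\xi)\, d_n^2.
\]
Here the factor $G(\xi, T_i\xi, T_i\xi)$ is a fixed finite quantity while $d_n^2 \to 0$, so the right-hand side vanishes; letting $n\to\infty$ and invoking the continuity of $G$ (Theorem \ref{theorem1}) on the left gives $G^3(T_i\xi, \xi, \xi)=0$, whence $T_i\xi=\xi$. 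Since $i$ was arbitrary, $\xi$ is a common fixed point of the entire family.

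For uniqueness, supposing $\eta=T_i\eta$ for all $i$ as well, I would apply \eqref{condthm2} to $(T_1\xi, T_1\xi, T_2\eta)=(\xi,\xi,\eta)$; the factor $G(\xi, T_1\xi, T_1\xi)=G(\xi,\xi,\xi)=0$ by (G1) forces $G^3(\xi,\xi,\eta)=0$, and then (G2) gives $\xi=\eta$. I do not expect a deep obstacle anywhere; the only real care lies in the index bookkeeping, namely in exploiting that the contraction is assumed for \emph{all} triples $(i,j,k)$ simultaneously so that one index may be held fixed while the others run to infinity along the sequence.
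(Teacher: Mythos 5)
Your argument is correct, and its skeleton --- iterate $x_n=T_nx_{n-1}$ through the family, extract geometric decay, conclude $G$-Cauchyness via (G5), identify the limit with the contractive condition and continuity of $G$, and get uniqueness from the vanishing factor $G(\xi,T_1\xi,T_1\xi)=0$ --- is exactly what the paper intends (it gives no proof of this theorem, deferring to Theorem \ref{thm1} and, implicitly, to the arguments later written out for Theorems \ref{series} and \ref{final}). The one point where you genuinely diverge is the choice of test triple: the paper's template feeds the three consecutive points $(x_{n-1},x_n,x_{n+1})$ with three distinct indices $(n,n+1,n+2)$ into \eqref{condthm2}, uses (G3) to majorize each factor $G(x_i,x_{i+1},x_{i+1})$ by $G(x_{i-1},x_i,x_{i+1})$ or $G(x_i,x_{i+1},x_{i+2})$, cancels, and obtains $G(x_n,x_{n+1},x_{n+2})\le q\,G(x_{n-1},x_n,x_{n+1})$, only afterwards coming back down to $G(x_n,x_{n+1},x_{n+1})$ for the Cauchy estimate; you instead use the degenerate triple $(x_{n-1},x_n,x_n)$ with the repeated index $n+1$, which yields $d_n\le q\,d_{n-1}$ in one line and dispenses with (G3) entirely. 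That is a small but real simplification, legitimate precisely because \eqref{condthm2} is assumed for all index patterns. One detail to tighten: the cancellation of $d_n^2$ requires $d_n>0$, and if instead $x_n=x_{n+1}$ your parenthetical ``a fixed point appears immediately'' only produces a fixed point of the single map $T_{n+1}$; you should add the one-line observation (Claim 1 in the proof of Theorem \ref{series}) that a fixed point of one $T_i$ is automatically a common fixed point of the whole family, after which the reduction is complete.
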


We conclude this section with the following result, where we make use of the idea of $\alpha$-series.

\begin{theorem}\label{series}
	Let $(X, G)$ be a complete $G$-metric space. Let $T_i : X \to X, \ i=1,2,\cdots$ be a family of maps that satisfy
	
	\begin{equation}\label{condseries}
	G^3(T_ix, T_j y, T_k z) \leq \tensor*[_{k}]{\Delta}{_{i,j}} G(x, T_i x, T_i x)G(y, T_j y, T_j y)G(z, T_k z, T_k z)
	\end{equation}
	for all $x, y, z \in X$, where $0 \leq \tensor*[_{k}]{\Delta}{_{i,j}} < 1$. If the series $\sum_{i=1}^{\infty}\tensor*[_{i+2}]{\Delta}{_{i,i+1}}$ is an $\alpha$-series, then the $T_i$'s, $i=1,2,\cdots,$ have a unique common fixed point.
\end{theorem}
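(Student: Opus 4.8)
The plan is to follow the pattern of Theorem \ref{thm1} and Theorem \ref{thm2}, replacing the single geometric ratio by a product of the coefficients $\tensor*[_{i+2}]{\Delta}{_{i,i+1}}$ and controlling that product through the $\alpha$-series hypothesis. First I would fix an arbitrary $x_0 \in X$ and form the Picard-type orbit $x_n = T_n x_{n-1}$ for $n \ge 1$, assuming as usual that the $x_n$ are pairwise distinct (otherwise a fixed point is reached in finitely many steps). Writing $s_n = G(x_{n-1}, x_n, x_n)$ and $D_n = G(x_n, x_{n+1}, x_{n+2})$, I would extract two inequalities from \eqref{condseries}. Applying it to the triple $(x_{n-1}, x_n, x_n)$ with the maps $(T_n, T_{n+1}, T_{n+1})$ gives $s_{n+1}^3 \le \tensor*[_{n+1}]{\Delta}{_{n,n+1}}\, s_n s_{n+1}^2$, whence $s_{n+1} \le \tensor*[_{n+1}]{\Delta}{_{n,n+1}}\, s_n \le s_n$, so the step-sizes are non-increasing; applying it instead to $(x_{n-1}, x_n, x_{n+1})$ with $(T_n, T_{n+1}, T_{n+2})$ gives the cubic estimate $D_n^3 \le \tensor*[_{n+2}]{\Delta}{_{n,n+1}}\, s_n s_{n+1} s_{n+2}$.

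Next I would combine these. Since the step-sizes decrease, $s_n s_{n+1} s_{n+2} \le s_n^3$, and by (G3) together with the symmetry (G4) one has $s_n = G(x_{n-1}, x_n, x_n) \le G(x_{n-1}, x_n, x_{n+1}) = D_{n-1}$. Feeding these into the cubic estimate yields the single-step contraction $D_n \le (\tensor*[_{n+2}]{\Delta}{_{n,n+1}})^{1/3}\, D_{n-1}$, and therefore $D_n \le \left(\prod_{i=1}^n \tensor*[_{i+2}]{\Delta}{_{i,i+1}}\right)^{1/3} D_0$.

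Then comes the decisive step, and the one I expect to be the main obstacle: turning the $\alpha$-series hypothesis into genuine geometric decay. Writing $a_i = \tensor*[_{i+2}]{\Delta}{_{i,i+1}}$, Definition \ref{vetro} supplies $0 < \lambda < 1$ and $n(\lambda)$ with $\sum_{i=1}^L a_i \le \lambda L$ for $L \ge n(\lambda)$. The arithmetic--geometric mean inequality then gives $\left(\prod_{i=1}^L a_i\right)^{1/L} \le \frac{1}{L}\sum_{i=1}^L a_i \le \lambda$, so $\prod_{i=1}^L a_i \le \lambda^L$ and hence $D_L \le \lambda^{L/3} D_0$ for all large $L$; this is precisely the mechanism that lets a family whose coefficients may approach $1$ still behave like a contraction. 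From here the argument is routine: since $s_{n+1} \le D_n$, repeated use of (G5) gives, for $m > n \ge n(\lambda)$, the bound $G(x_n, x_m, x_m) \le \sum_{j=n}^{m-1} s_{j+1} \le D_0\sum_{j \ge n} \lambda^{j/3}$, the tail of a convergent geometric series, so $\{x_n\}$ is $G$-Cauchy and, by completeness, $G$-converges to some $\xi \in X$.

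Finally I would identify $\xi$ as the unique common fixed point. For a fixed index $m$, apply \eqref{condseries} to $(\xi, x_n, x_n)$ with maps $(T_m, T_{n+1}, T_{n+1})$ to obtain $G^3(T_m\xi, x_{n+1}, x_{n+1}) \le \tensor*[_{n+1}]{\Delta}{_{m,n+1}}\, G(\xi, T_m\xi, T_m\xi)\, s_{n+1}^2$; letting $n \to \infty$ and using that $G$ is continuous (Theorem \ref{theorem1}), that $x_{n+1} \to \xi$, and that $s_{n+1} \to 0$, the right-hand side vanishes, forcing $G(T_m\xi, \xi, \xi) = 0$, i.e.\ $T_m\xi = \xi$; as $m$ was arbitrary, $\xi$ is common to all the $T_i$. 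For uniqueness, if $\eta = T_i\eta$ for every $i$, then applying \eqref{condseries} to $(\xi, \xi, \eta)$ makes each factor on the right-hand side vanish, so $G(\xi,\xi,\eta)=0$ and $\eta = \xi$. The only delicate points are the index bookkeeping in the coefficients $\tensor*[_{k}]{\Delta}{_{i,j}}$ and the AM--GM estimate; the remaining manipulations mirror those already carried out in Theorem \ref{thm1}.
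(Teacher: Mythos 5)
Your proposal is correct and follows essentially the same route as the paper: the Picard-type orbit $x_n = T_n x_{n-1}$, a recursive bound on $G(x_n,x_{n+1},x_{n+2})$ in terms of the coefficients $r_i = \tensor*[_{i+2}]{\Delta}{_{i,i+1}}$, the arithmetic--geometric mean inequality to convert the $\alpha$-series hypothesis into a geometric tail, and a passage to the limit in the contractive condition to identify and uniquely characterize the common fixed point. The only (harmless) deviation is in extracting the one-step recursion: you take cube roots after showing the step-sizes $G(x_{n-1},x_n,x_n)$ are non-increasing, obtaining $D_n \le r_n^{1/3} D_{n-1}$, whereas the paper enlarges the right-hand factors via (G3) and cancels $G^2(x_n,x_{n+1},x_{n+2})$ to get $D_n \le r_n D_{n-1}$ directly; both yield the required geometric decay.
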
 

\begin{proof}
	
	We will proceed in two main steps.
	
	\vspace{0.3cm}
	
	\underline{Claim 1:}  Any fixed point of $T_i$ is also a fixed point of $T_j$ and $T_k$ for $i\neq j\neq k\neq i$.
	
	Indeed, assume that $x^*$ is a fixed point of $T_i$ and suppose that $T_jx^*\neq x^*$ and $T_kx^*\neq x^*$. Then 
	
	\begin{align*}
	G^3(x^*,T_jx^*,T_kx^*) &= G^3(T_ix^*,T_jx^*,T_kx^*) \\
	& \leq  (\tensor*[_{k}]{\Delta}{_{i,j}})G(x^*,x^*,x^*) 
	 G(x^*,T_jx^*,T_jx^*)+ G(x^*,T_kx^*,T_kx^*) ].\\
	\end{align*}
So $G^3(x^*,T_jx^*,T_kx^*)=0$, i.e. $ T_ix^* = x^* =T_jx^*=T_kx^*$.
\vspace{0.3cm}
	
For any $x_0\in X$, we build the sequence $(x_n)$ by setting $x_n= T_n(x_{n-1}),\ n=1,2,\cdots .$ We assume without loss of generality that $x_n\neq x_{m}$ for any $n,m\in \mathbb{N}$.
Using \eqref{condseries}, we obtain

\begin{align*}
G^3(x_n,x_{n+1},x_{n+2}) & = G^3(T_n(x_{n-1}), T_{n+1}(x_{n}),T_{n+2}(x_{n+1})) \\
                    & \leq \left(\tensor*[_{n+2}]{\Delta}{_{n,n+1}}\right)G(x_{n-1},x_n,x_n)G(x_n,x_{n+1},x_{n+1})G(x_{n+1},x_{n+2},x_{n+2})\\
                    &\leq \left(\tensor*[_{n+2}]{\Delta}{_{n,n+1}}\right)G(x_{n-1},x_n,x_{n+1})G(x_n,x_{n+1},x_{n+2})G(x_{n},x_{n+1},x_{n+2})
\end{align*}

by property (G3).

Hence
\[  G(x_n,x_{n+1},x_{n+2}) \leq \left(\tensor*[_{n+2}]{\Delta}{_{n,n+1}}\right)G(x_{n-1},x_n,x_{n+1})  \]

and we obtain recursively

\begin{align*}G(x_n,x_{n+1},x_{n+2}) &\leq \left(\tensor*[_{n+2}]{\Delta}{_{n,n+1}}\right)G(x_{n-1},x_n,x_{n+1})\\
             & \leq \left(\tensor*[_{n+2}]{\Delta}{_{n,n+1}}\right) \left(\tensor*[_{n+1}]{\Delta}{_{n-1,n}}\right) G(x_{n-2},x_{n-1},x_{n})\\
             &\leq \left( \prod_{i=1}^{n} \tensor*[_{i+2}]{\Delta}{_{i,i+1}}\right)G(x_0,x_1,x_2).
\end{align*}
If we set $r_i  = \tensor*[_{i+2}]{\Delta}{_{i,i+1}}$, we have that
$$G(x_{n},x_{n+1},x_{n+2}) \leq \left[ \prod\limits_{i=1}^n r_i \right] G(x_0,x_1,x_2). $$

Therefore, for all $l>m>n$

\begin{align*}
G(x_n,x_{m},x_{l}) & \leq G(x_{n},x_{n+1},x_{n+1})+  G(x_{n+1},x_{n+2},x_{n+2}) + \\
& + \cdots + G(x_{l-1},x_{l-1},x_{l}) \\ 
& \leq G(x_{n},x_{n+1},x_{n+2})+  G(x_{n+1},x_{n+2},x_{n+3}) + \\
& + \cdots + G(x_{l-2},x_{l-1},x_{l}),
\end{align*}

and

\begin{align*}
G(x_n,x_{m},x_{l})& \leq \left(\left[ \prod\limits_{i=1}^n r_i \right] + 
\left[ \prod\limits_{i=1}^{n+1} r_i \right]+ \cdots + \left[ \prod\limits_{i=1}^{n+l-1} r_i \right]\right) G(x_0,x_1,x_2) \\
& = \sum_{k=0}^{l-1}\left[\prod\limits_{i=1}^{n+k}r_i\right]G(x_0,x_1,x_2)\\
& =\sum_{k=n}^{n+l-1}\left[\prod\limits_{i=1}^{k}r_i\right]G(x_0,x_1,x_2).
\end{align*}

Now, let $\lambda$ and $n(\lambda)$ as in Definition \ref{vetro}, then for $n\geq n(\lambda)$ and using the fact that the geometric mean of non-negative real numbers is at most their arithmetic mean, it follows that

\begin{align*}
G(x_n,x_{m},x_{l})& \leq \sum_{k=n}^{n+l-1}\left[\frac{1}{k}\left(\sum_{i=1}^{k}r_i\right)\right]^kG(x_0,x_1,x_2)\\
& =\left(\sum_{k=n}^{n+l-1} \alpha^k\right) G(x_0,x_1,x_2)\\
& \leq \frac{\alpha^n}{1-\alpha}G(x_0,x_1,x_2).
\end{align*}

As $n\to \infty$, we deduce that $G(x_n,x_{m},x_{l}) \to 0.$ Thus $(x_n)$ is a $G$-Cauchy sequence.

Moreover, since $X$ is $G$-complete there exists $u \in X$ such that $(x_n)$ $G$-converges to $u$.

If there exists $n_0$ such that $T_{n_0}u=u$, then by the claim 1, the proof of existence is complete.

Otherwise for any positive integers $k, l$, we have
\begin{align*}
G^3(x_n, T_ku,T_lu)  & = G^3(T_nx_{n-1},T_ku,T_lu) \\
& \leq (\tensor*[_{l}]{\Delta}{_{n,k}})G(x_{n-1},x_n,x_n) G(u,T_ku,T_ku)G(u,T_lu,T_lu).\\
\end{align*}

Letting $n\to \infty$ we obtain
$G^3(u,T_ku,T_lu)=0$, i.e. $ u =T_ku=T_lu$ and $u$ is a common fixed point to the $T_i$'s.
The uniqueness of the fixed point is readily seen from condition \eqref{condseries}.

\end{proof}

The next two results are variants of Theorem \ref{series}. We shall state them without proof as the proofs follow the same technique as the one we just presented.

\begin{theorem}\label{series1}
	Let $(X, G)$ be a complete $G$-metric space. Let $T_i : X \to X, \ i=1,2,\cdots$ be a family of maps that satisfy
	
	\begin{equation}\label{condseries1}
	G^3(T_ix, T_j y, T_k z) \leq \tensor*[_{k}]{\Delta}{_{i,j}} G(x, T_i x, T_i x)G(y, T_j y, T_j y)G(z, T_k z, T_k z)
	\end{equation}
	for all $x, y, z \in X$, where $0 \leq \tensor*[_{k}]{\Delta}{_{i,j}} < 1$. 
If

	\begin{itemize}
		\item[i)] for each $j,k$, $\limsup_{i\to \infty} \tensor*[_{k}]{\Delta}{_{i,j}} <1$ ,
		
		\item[ii)] 
		$$
		\sum_{n=1}^{\infty} C_n <\infty \text{ where } C_n = \prod_{i=1}^{n}r_i = \prod_{i=1}^{n}\tensor*[_{i+2}]{\Delta}{_{i,i+1}},
		$$
		
	\end{itemize}
	then the $T_i$'s have a unique common fixed point in $X$.
	
\end{theorem}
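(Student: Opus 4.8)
The plan is to transcribe, essentially line for line, the architecture of the proof of Theorem \ref{series}, the only genuinely new ingredient being the step where the $\alpha$-series hypothesis was used. First I would record the analogue of Claim~1: if $x^*$ is a fixed point of some $T_i$, then feeding the degenerate triplet into \eqref{condseries1} and using $G(x^*, T_i x^*, T_i x^*) = G(x^*,x^*,x^*)=0$ forces $G^3(x^*, T_j x^*, T_k x^*)=0$, so $x^*$ is automatically a common fixed point of every $T_j$ and $T_k$. This reduces existence to producing a single point fixed by one of the maps.

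Next I would build the orbit $x_n = T_n(x_{n-1})$ and derive, exactly as before, the one-step estimate $G(x_n, x_{n+1}, x_{n+2}) \leq r_n\, G(x_{n-1}, x_n, x_{n+1})$ with $r_n = \tensor*[_{n+2}]{\Delta}{_{n,n+1}}$, the passage between the repeated-argument and consecutive-index forms of $G$ being handled by property (G3). Iterating gives $G(x_n, x_{n+1}, x_{n+2}) \leq C_n\, G(x_0, x_1, x_2)$ with $C_n = \prod_{i=1}^n r_i$, and the telescoping bound from the earlier proof yields, for $l > m > n$,
\[ G(x_n, x_m, x_l) \leq \left( \sum_{k=n}^{n+l-1} C_k \right) G(x_0, x_1, x_2) \leq \left( \sum_{k=n}^{\infty} C_k \right) G(x_0, x_1, x_2). \]
This is precisely the point at which the two arguments diverge: whereas Theorem \ref{series} invoked the arithmetic–geometric mean inequality to dominate $C_k$ by $\alpha^k$ and sum a geometric tail, here I would appeal to hypothesis (ii) directly. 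Since $\sum_{k=1}^{\infty} C_k < \infty$, its tails $\sum_{k\geq n} C_k$ vanish as $n \to \infty$, so $G(x_n,x_m,x_l) \to 0$ and $(x_n)$ is $G$-Cauchy; completeness then supplies a limit $u$.

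Finally, to identify $u$ as a common fixed point, if $T_{n_0}u=u$ for some $n_0$ we are done by Claim~1; otherwise I would apply \eqref{condseries1} to the triplet $(x_n, T_k u, T_l u)$, obtaining
\[ G^3(x_n, T_k u, T_l u) \leq \tensor*[_{l}]{\Delta}{_{n,k}}\, G(x_{n-1}, x_n, x_n)\, G(u, T_k u, T_k u)\, G(u, T_l u, T_l u), \]
and let $n \to \infty$. Here hypothesis (i) earns its keep: it guarantees $\limsup_{n} \tensor*[_{l}]{\Delta}{_{n,k}} < 1$, so the coefficient stays bounded while $G(x_{n-1}, x_n, x_n) \to 0$, whence continuity of $G$ gives $G^3(u, T_k u, T_l u)=0$, i.e.\ $u = T_k u = T_l u$ for all $k,l$. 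Uniqueness is then immediate: two common fixed points $u,v$ satisfy $G^3(u, u, v) = G^3(T_i u, T_i u, T_j v) \leq \tensor*[_{j}]{\Delta}{_{i,i}}\, G(u, T_i u, T_i u)\, G(u, T_i u, T_i u)\, G(v, T_j v, T_j v) = 0$, because $G(u,T_i u, T_i u)=0$, so $u=v$. Since everything apart from the summability step is a faithful copy of the Theorem \ref{series} computation, I expect no serious obstacle; the only care required is bookkeeping the telescoping inequality and confirming that hypothesis (ii) alone, without the mean inequality, closes the Cauchy estimate, with hypothesis (i) reserved for the passage to the limit in the concluding step.
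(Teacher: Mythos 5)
The paper gives no proof of this theorem, stating only that it follows the technique of Theorem \ref{series}; your proposal carries out exactly that technique, with the correct and intended modification of replacing the arithmetic--geometric mean / $\alpha$-series step by the direct summability of $\sum_n C_n$ from hypothesis (ii). The argument is correct (note only that hypothesis (i) is not really needed in the final limit step, since every coefficient $\tensor*[_{l}]{\Delta}{_{n,k}}$ is already bounded by $1$ and $G(x_{n-1},x_n,x_n)\to 0$ does all the work).
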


\begin{theorem}\label{series2}
	Let $(X, G)$ be a complete $G$-metric space. Let $T_i : X \to X, \ i=1,2,\cdots$ be a family of maps. Assume that there exist a sequence $(a_n)$ of elements of $X$ such

	\begin{equation}\label{condseries2}
	G^3(T_ix, T_j y, T_k z) \leq \tensor*[_{k}]{\Delta}{_{i,j}} G(x, T_i x, T_i x)G(y, T_j y, T_j y)G(z, T_k z, T_k z)
	\end{equation}
	for all $x, y, z \in X$, where $\tensor*[_{k}]{\Delta}{_{i,j}}:=G(a_i,a_j,a_k)$, and  with $0\leq \tensor*[_{k}]{\Delta}{_{i,j}} <1 , \ i,j,k = 1,2,\cdots$. If the sequence $(r_i)$ where  $r_i = \tensor*[_{i+2}]{\Delta}{_{i,i+1}}$
	is a non-increasing $\lambda$-sequence of $\mathbb{R}^+$ endowed with the $\max\footnote{The max metric $m$ refers to $m(x,y)=\max\{x,y\}$ }$ metric, then the $T_i$'s, $i=1,2,\cdots,$ have a unique common fixed point.
\end{theorem}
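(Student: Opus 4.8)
The plan is to reduce Theorem~\ref{series2} to the already-proved Theorem~\ref{series}. Since the contractive inequality \eqref{condseries2} is word-for-word the inequality \eqref{condseries}, and since in both statements the coefficients satisfy $0\le \tensor*[_{k}]{\Delta}{_{i,j}}<1$, the only thing that genuinely needs checking is that the new hypothesis --- that $(r_i)$, with $r_i=\tensor*[_{i+2}]{\Delta}{_{i,i+1}}$, is a non-increasing $\lambda$-sequence of $\mathbb{R}^+$ under the max metric --- implies that $\sum_{i=1}^{\infty} r_i$ is an $\alpha$-series in the sense of Definition~\ref{vetro}. Once that implication is in place, Theorem~\ref{series} applies verbatim and delivers the unique common fixed point.

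First I would make explicit the $G$-metric carried by the max metric: on $\mathbb{R}^+$ one takes $G_{\max}(x,y,z)=\max\{x,y,z\}$, so that the term appearing in the definition of a $\lambda$-sequence becomes
\[
G_{\max}(r_i,r_{i+1},r_{i+1})=\max\{r_i,r_{i+1}\}.
\]
Here the non-increasing assumption does the decisive work: $\max\{r_i,r_{i+1}\}=r_i$, so the defining inequality of a $\lambda$-sequence,
\[
\sum_{i=1}^{L-1} G_{\max}(r_i,r_{i+1},r_{i+1})\le \lambda L \qquad (L\ge n(\lambda)+1),
\]
collapses to $\sum_{i=1}^{L-1} r_i\le \lambda L$ for all $L\ge n(\lambda)+1$.

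Next I would massage this into the exact shape required by Definition~\ref{vetro}. Setting $L'=L-1$ gives $\sum_{i=1}^{L'} r_i\le \lambda(L'+1)=\lambda L'+\lambda$ for $L'\ge n(\lambda)$; choosing any $\lambda'\in(\lambda,1)$ and an index $n(\lambda')\ge\max\{n(\lambda),\,\lambda/(\lambda'-\lambda)\}$, the additive error $\lambda$ is absorbed and we obtain $\sum_{i=1}^{L'} r_i\le \lambda' L'$ for every $L'\ge n(\lambda')$. This is precisely the assertion that $\sum_{i=1}^{\infty} r_i=\sum_{i=1}^{\infty}\tensor*[_{i+2}]{\Delta}{_{i,i+1}}$ is an $\alpha$-series, so Theorem~\ref{series} can be invoked to conclude.

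I expect the main --- indeed essentially the only --- subtlety to lie in this unwinding of the phrase ``non-increasing $\lambda$-sequence under the max metric'': one must fix the correct convention $G_{\max}(x,y,z)=\max\{x,y,z\}$, use monotonicity to reduce $\max\{r_i,r_{i+1}\}$ to $r_i$, and carry out the constant bookkeeping that converts the $\lambda$-sequence bound (summation up to $L-1$, right-hand side $\lambda L$) into the $\alpha$-series bound with a slightly enlarged ratio $\lambda'$. Everything downstream --- the construction of the iterative sequence $x_n=T_n(x_{n-1})$, the geometric-mean/arithmetic-mean estimate, the $G$-Cauchy argument and the passage to the limit --- is already carried out in the proof of Theorem~\ref{series} and need not be repeated.
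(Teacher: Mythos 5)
Your reduction is correct and is essentially what the paper intends: Theorem \ref{series2} is stated without proof, with only the remark that it ``follows the same technique'' as Theorem \ref{series}, and the sole new content is exactly the bridge you supply --- for a non-increasing sequence in $\mathbb{R}^+$ with $G_{\max}(x,y,z)=\max\{x,y,z\}$ one has $G_{\max}(r_i,r_{i+1},r_{i+1})=r_i$, so the $\lambda$-sequence bound $\sum_{i=1}^{L-1}r_i\le \lambda L$ turns into the $\alpha$-series bound after your reindexing and the enlargement of $\lambda$ to some $\lambda'\in(\lambda,1)$. With that established, invoking Theorem \ref{series} verbatim is a legitimate and clean way to finish.
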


The main result of this section (generalization of Theorem \ref{thm2}) then goes as follows:

\begin{theorem}\label{final}
	Let $(X, G)$ be a complete $G$-metric space. Let $T_i : X \to X, \ i=1,2,\cdots$ be a family of maps that satisfy
	
	\begin{equation}\label{condfinal}
	G^{2k+1}(T_ix, T_j y, T_k z) \leq  qG(x, T_i x, T_i x)G^k(y, T_j y, T_j y)G^k(z, T_k z, T_k z)
	\end{equation}
	for all $x, y, z \in X$, where $0 \leq q < 1$ and some $k\in \mathbb{N}$. Then the $T_i$'s, $i=1,2,\cdots,$ have a unique common fixed point.
\end{theorem}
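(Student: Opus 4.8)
The plan is to merge the two techniques already on display in the preceding proofs: the exponent-balancing trick that turns a $(2k+1)$-power inequality into a linear contraction (as in Theorem~\ref{thmk}), and the sequence built from the whole family together with the ``Claim~1'' device (as in Theorems~\ref{thm2} and \ref{series}). First I would fix $x_0\in X$ and set $x_n=T_n(x_{n-1})$ for $n=1,2,\dots$, assuming without loss of generality that the $x_n$ are pairwise distinct, so that every $G$-value of three consecutive iterates is strictly positive by (G2). Applying \eqref{condfinal} with $x=x_{n-1},\,y=x_n,\,z=x_{n+1}$ and the maps $T_n,T_{n+1},T_{n+2}$ gives
\begin{equation*}
G^{2k+1}(x_n,x_{n+1},x_{n+2})\leq q\,G(x_{n-1},x_n,x_n)\,G^k(x_n,x_{n+1},x_{n+1})\,G^k(x_{n+1},x_{n+2},x_{n+2}).
\end{equation*}
I note in passing that the statement overloads the symbol $k$ for both the exponent and the third family index $T_k$; in the proof I would reserve $k$ for the exponent and write $i,j,l$ for the family indices.

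Next I would bound each factor on the right by $G$ of three consecutive distinct iterates, using (G3) together with the symmetry (G4) exactly as in the proof of Theorem~\ref{series}: namely $G(x_{n-1},x_n,x_n)\leq G(x_{n-1},x_n,x_{n+1})$, $G(x_n,x_{n+1},x_{n+1})\leq G(x_n,x_{n+1},x_{n+2})$, and $G(x_{n+1},x_{n+2},x_{n+2})\leq G(x_n,x_{n+1},x_{n+2})$. Substituting yields
\begin{equation*}
G^{2k+1}(x_n,x_{n+1},x_{n+2})\leq q\,G(x_{n-1},x_n,x_{n+1})\,G^{2k}(x_n,x_{n+1},x_{n+2}),
\end{equation*}
and dividing through by the positive factor $G^{2k}(x_n,x_{n+1},x_{n+2})$ collapses everything to the linear recursion $G(x_n,x_{n+1},x_{n+2})\leq q\,G(x_{n-1},x_n,x_{n+1})$, whence $G(x_n,x_{n+1},x_{n+2})\leq q^n\,G(x_0,x_1,x_2)$. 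From here the Cauchy estimate is verbatim that of Theorems~\ref{thm2} and \ref{series}: for $l>m>n$ a repeated use of (G5) followed by (G3) telescopes $G(x_n,x_m,x_l)$ into a sum of the $G(x_j,x_{j+1},x_{j+2})$, which is bounded by $\tfrac{q^n}{1-q}G(x_0,x_1,x_2)\to 0$ (the degenerate index cases $l=m>n$ and $l>m=n$ being handled as before). Thus $\{x_n\}$ is $G$-Cauchy and, by completeness, converges to some $u\in X$.

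To identify $u$ I would first record the analogue of Claim~1: if $T_i x^*=x^*$ for some $i$, then applying \eqref{condfinal} with $x=y=z=x^*$ and maps $T_i,T_j,T_l$ makes the factor $G(x^*,T_ix^*,T_ix^*)=G(x^*,x^*,x^*)=0$, forcing $G^{2k+1}(x^*,T_jx^*,T_lx^*)=0$ and hence $T_jx^*=T_lx^*=x^*$ for all $j,l$; so any single fixed point is automatically common. Then, exactly as in Theorem~\ref{series}, either some $T_{n_0}u=u$ (and Claim~1 finishes existence), or else I apply \eqref{condfinal} to $G^{2k+1}(x_n,T_pu,T_lu)=G^{2k+1}(T_nx_{n-1},T_pu,T_lu)$, let $n\to\infty$ using $G(x_{n-1},x_n,x_n)\to 0$ and the continuity of $G$ (Theorem~\ref{theorem1}), and conclude $G^{2k+1}(u,T_pu,T_lu)=0$, i.e. $u=T_pu=T_lu$ for all $p,l$. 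Uniqueness is immediate: for a second common fixed point $v$, condition \eqref{condfinal} gives $G^{2k+1}(u,u,v)=G^{2k+1}(T_iu,T_ju,T_lv)\leq q\,G(u,u,u)\,G^k(u,u,u)\,G^k(v,v,v)=0$, so $u=v$.

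The only genuinely new step is the exponent bookkeeping in the passage from the first display to the linear recursion: I must check that the $2k+1$ powers on the left are matched on the right by exactly one ``new'' factor $G(x_{n-1},x_n,x_{n+1})$ (from the $x_{n-1}$ term) together with $2k$ copies of $G(x_n,x_{n+1},x_{n+2})$ (from the two $k$-th power terms), so that division leaves precisely a single power of $G(x_n,x_{n+1},x_{n+2})$ and the clean contraction constant $q$. Everything downstream---the Cauchy estimate, the limiting argument, Claim~1, and uniqueness---is then a transcription of the arguments in Theorems~\ref{thmk}, \ref{thm2} and \ref{series}, which is exactly why the result can be obtained ``in the same manner.''
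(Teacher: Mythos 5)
Your proposal is correct and follows essentially the same route as the paper's own proof: the same iterate sequence $x_n=T_n(x_{n-1})$, the same use of (G3) to majorize each factor by $G$ of three consecutive terms, the same cancellation of $G^{2k}(x_n,x_{n+1},x_{n+2})$ to reach the linear recursion $G(x_n,x_{n+1},x_{n+2})\leq q\,G(x_{n-1},x_n,x_{n+1})$, and the same limiting argument $G^{2k+1}(x_n,T_pu,T_lu)\to 0$ for existence and the direct substitution for uniqueness. Your additions (the explicit positivity check before dividing, the Claim~1 device imported from Theorem~\ref{series}, and the remark about the overloaded index $k$) only make explicit what the paper leaves as ``usual procedure.''
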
 

\begin{proof}
	For any $x_0\in X$, we build the sequence $(x_n)$ by setting $x_n= T_n(x_{n-1}),\ n=1,2,\cdots .$ We assume without loss of generality that $x_n\neq x_{m}$ for any $n,m\in \mathbb{N}$.
	Using \eqref{condfinal}, we obtain

	\begin{align*}
	G^{2k+1}(x_n,x_{n+1},x_{n+2}) & = G^{2k+1}(T_n(x_{n-1}), T_{n+1}(x_{n}),T_{n+2}(x_{n+1})) \\
	& \leq qG(x_{n-1},x_n,x_n)G^k(x_n,x_{n+1},x_{n+1})G^k(x_{n+1},x_{n+2},x_{n+2})\\
	&\leq q G(x_{n-1},x_n,x_{n+1})G^k(x_n,x_{n+1},x_{n+2})G^k(x_{n},x_{n+1},x_{n+2})
	\end{align*}
by property (G3).

Hence

\begin{equation}\label{key1} 
G(x_n,x_{n+1},x_{n+2}) \leq qG(x_{n-1},x_n,x_{n+1}), 	
	\end{equation}
By usual procedure from \eqref{key1}, since $q<1$, it follows that $\{x_n\}$ is a $G$-Cauchy sequence. By $G$-completeness of $X$, there exists $x^*\in X$ such that $\{x_n\}$ $G$-converges to $x^*.$	
	
For any positive integers $k, l$, we have
\begin{align*}
G^{2k+1}(x_n, T_ku,T_lu)  & = G^{2k+1}(T_nx_{n-1},T_ku,T_lu) \\
& \leq q G(x_{n-1},x_n,x_n) G^k(u,T_ku,T_ku)G^k(u,T_lu,T_lu).\\
\end{align*}

Letting $n\to \infty$ we obtain
$G^{2k+1}(u,T_ku,T_lu)=0$, i.e. $ u =T_ku=T_lu$ and $u$ is a common fixed point to the $T_i$'s.
The uniqueness of the fixed point is readily seen from condition \eqref{condfinal}.	
	
\end{proof}

\begin{remark}
	Variants of Theorem \ref{final} can also be easily formulated using ideas from Theorem \ref{series1} and Theorem \ref{series2} respectively. This will be left to the reader.
\end{remark}

\begin{theorem}\label{end}
	Let $(X, G)$ be a complete $G$-metric space. Let $T : X \to X$ be a $G$-continuous mapping that satisfies
	
	\begin{equation}\label{condend}
	G^{2k+1}(T^{2k-1}x, T^{2k} x, T^{2k+1} x) \leq  q A_k(x)
	\end{equation}
where $$A_k(x)= 
G(T^{2k-2}x, T^{2k-1} x, T^{2k-1}x)G^k(T^{2k-1}x, T^{2k} x, T^{2k}x) G^k(T^{2k}x, T^{2k+1} x, T^{2k+1}x)$$	for $x \in X$, where $0 \leq q < 1$ and some $k\in \mathbb{N}$. Then $T$ has a fixed point.
\end{theorem}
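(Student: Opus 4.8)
The plan is to mimic the orbit analysis used in Theorems \ref{thmk} and \ref{final}, the key new observation being that evaluating the hypothesis \eqref{condend} along the Picard orbit of $T$ collapses its apparently complicated dependence on high iterates into a single geometric recursion. Fix $x_0\in X$ and set $x_n:=T^n x_0$ for $n\geq 0$. If $x_n=x_{n+1}$ for some $n$, then $x_n=Tx_n$ and we are done; hence I would assume, exactly as in the previous proofs and without loss of generality, that the $x_n$ are pairwise distinct, so that in particular $G(x_m,x_{m+1},x_{m+2})>0$ for all $m$.

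Next I would substitute $x=x_n$ into \eqref{condend}. Since $T^{2k-1}x_n=x_{n+2k-1}$, $T^{2k}x_n=x_{n+2k}$, $T^{2k+1}x_n=x_{n+2k+1}$ and $T^{2k-2}x_n=x_{n+2k-2}$, writing $m:=n+2k-1$ turns the hypothesis into
\begin{equation*}
G^{2k+1}(x_m,x_{m+1},x_{m+2})\leq q\,G(x_{m-1},x_m,x_m)\,G^k(x_m,x_{m+1},x_{m+1})\,G^k(x_{m+1},x_{m+2},x_{m+2}),
\end{equation*}
valid for every $m\geq 2k-1$. This is formally identical to the first display in the proof of Theorem \ref{final}, so I would now apply (G4) together with (G3) to each of the three factors on the right, bounding $G(x_{m-1},x_m,x_m)\leq G(x_{m-1},x_m,x_{m+1})$ as well as $G(x_m,x_{m+1},x_{m+1})\leq G(x_m,x_{m+1},x_{m+2})$ and $G(x_{m+1},x_{m+2},x_{m+2})\leq G(x_m,x_{m+1},x_{m+2})$; here the pairwise distinctness of the orbit is exactly what licenses the $z\neq y$ hypothesis of (G3). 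Dividing the resulting inequality by $G^{2k}(x_m,x_{m+1},x_{m+2})>0$ yields the core recursion
\begin{equation*}
G(x_m,x_{m+1},x_{m+2})\leq q\,G(x_{m-1},x_m,x_{m+1})\qquad(m\geq 2k-1).
\end{equation*}

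Iterating this inequality down to the base index gives $G(x_m,x_{m+1},x_{m+2})\leq q^{\,m-2k+2}\,G(x_{2k-2},x_{2k-1},x_{2k})$, and from here the Cauchy estimate is the verbatim telescoping argument already used in Theorems \ref{thm1} and \ref{final}: for $l>m>n$ one first passes, via (G5) and (G3), from $G(x_n,x_m,x_l)$ to a sum of consecutive triple terms and then sums the geometric series, so that $G(x_n,x_m,x_l)\to 0$. By $G$-completeness the sequence $\{x_n\}$ then $G$-converges to some $u\in X$.

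The final step is where this proof genuinely departs from the earlier ones: because \eqref{condend} constrains $T$ only along orbits, I cannot feed the limit back into the contractive inequality as before, so instead I would invoke the standing hypothesis that $T$ is $G$-continuous. Since $x_n\to u$, the $G$-continuity of $T$ gives $x_{n+1}=Tx_n\to Tu$; but $\{x_{n+1}\}$ is a tail of $\{x_n\}$ and hence also $G$-converges to $u$, so uniqueness of limits (Proposition \ref{prop1}) forces $Tu=u$. The main obstacle is not this closing step but the bookkeeping of the second paragraph — one must track the index shift $m=n+2k-1$ carefully and verify that all three (G3) reductions are legitimate — and it is worth remarking that, consistently with the statement, no uniqueness is asserted, precisely because the hypothesis controls $T$ on a single trajectory rather than on arbitrary triples.
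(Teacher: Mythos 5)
Your proposal is correct and follows essentially the same route as the paper's own proof: evaluate the hypothesis along the Picard orbit, use (G4) and (G3) to reduce the right-hand side to $q\,G(x_{m-1},x_m,x_{m+1})G^{2k}(x_m,x_{m+1},x_{m+2})$, obtain the recursion $G(x_m,x_{m+1},x_{m+2})\leq q\,G(x_{m-1},x_m,x_{m+1})$, deduce $G$-Cauchyness by the telescoping estimate, and close with the $G$-continuity of $T$. Your treatment is in fact slightly more careful than the paper's, which blurs the index shift (it asserts $x_n=T^{2k+1}x_l$ where $x_n=T^{2k-1}x_l$ is meant) and does not record that the recursion only holds from $m\geq 2k-1$ onward with base term $G(x_{2k-2},x_{2k-1},x_{2k})$.
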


\begin{proof}
	For any $x_0\in X$, we build the sequence $(x_n)$ by setting $x_n= T_n(x_{n-1}),\ n=1,2,\cdots .$ We assume without loss of generality that $x_n\neq x_{m}$ for any $n,m\in \mathbb{N}$.
	
First, observe that for $n\in \mathbb{N},$ large enough, we can find $l\in \mathbb{N}$ such that $x_n= T^{2k+1}x_l$. So 

\begin{align*}
G^{2k+1}(x_{n}, x_{n+1},  x_{n+2})& =G^{2k+1}(T^{2k-1}x_l, T^{2k} x_l, T^{2k+1} x_l) \\
& \leq q A_k(x_l).
\end{align*}	
	
	Moreover,
	\begin{align*}
A_k(x_l) & =G(T^{2k-2}x_l, T^{2k-1} x_l, T^{2k-1}x_l)G^k(T^{2k-1}x_l, T^{2k} x_l, T^{2k}x_l) G^k(T^{2k}x_l, T^{2k+1} x_l, T^{2k+1}x_l)\\
      & = G(x_{n-1},x_n,x_n) G^k(x_n,x_{n+1},x_{n+1})G^k(x_{n+1},x_{n+2},x_{n+2}) \\
      & \leq G(x_{n-1},x_n,x_{n+1}) G^k(x_n,x_{n+1},x_{n+2})G^k(x_{n},x_{n+1},x_{n+2})
	\end{align*}

	by property (G3).
	
	Hence
	
	\begin{equation}\label{key12} 
	G(x_n,x_{n+1},x_{n+2}) \leq qG(x_{n-1},x_n,x_{n+1}), 	
	\end{equation}
	By usual procedure from \eqref{key12}, since $q<1$, it follows that $\{x_n\}$ is a $G$-Cauchy sequence. By $G$-completeness of $X$, there exists $x^*\in X$ such that $\{x_n\}$ $G$-converges to $x^*.$
	Furthermore, since $T$ is
	$G$-continuous, from $x_{n+1} = T x_n$ , letting $n \to \infty$ at both sides, we have $x^* = T x^*$. Thus,
	$x^*$ is a fixed point of $T$.

\end{proof}

\begin{theorem}\label{end1}
	Let $(X, G)$ be a complete $G$-metric space. Let $T : X \to X$ be a mapping that satisfies
	
	\begin{equation}\label{condend1}
	G^{2k+1}(T^{2k-1}x, T^{2k} y, T^{2k+1} z) \leq  q A_k(x)
	\end{equation}
	where $$A_k(x)= 
	G(T^{2k-2}x, T^{2k-1} x, T^{2k-1}x)G^k(T^{2k-1}y, T^{2k} y, T^{2k}y) G^k(T^{2k}z, T^{2k+1} z, T^{2k+1}z)$$	for $x,y,z \in X$, where $0 \leq q < 1$ and some $k\in \mathbb{N}$. Then $T$ has a unique fixed point.
\end{theorem}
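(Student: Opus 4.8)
The plan is to follow the template of the preceding theorems to manufacture a $G$-Cauchy iterate sequence, and then to replace the missing continuity of $T$ (which was available in Theorem \ref{end}) by an argument that exploits the three independent slots of \eqref{condend1}. First I would fix $x_0\in X$, set $x_n=T^nx_0$, and assume without loss of generality that the $x_n$ are pairwise distinct (otherwise two consecutive iterates coincide and a fixed point is already at hand). For $n$ large I would write $x_n=T^{2k-1}x_l,\ x_{n+1}=T^{2k}x_l,\ x_{n+2}=T^{2k+1}x_l$ with $l=n-2k+1$, substitute $x=y=z=x_l$ into \eqref{condend1}, and bound the three factors of $A_k$ from above via property (G3) exactly as in the proof of Theorem \ref{end}. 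Dividing by $G^{2k}(x_n,x_{n+1},x_{n+2})$ yields the familiar contraction
\[
G(x_n,x_{n+1},x_{n+2}) \le q\, G(x_{n-1},x_n,x_{n+1}),
\]
whence $G(x_n,x_{n+1},x_{n+2})\le q^n G(x_0,x_1,x_2)$ and the $G$-Cauchy property follow by the standard telescoping estimate. Completeness then provides a limit $x^*$ with $x_n\to x^*$.

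The substantive step, and the one I expect to be the main obstacle, is promoting $x^*$ to a fixed point without the continuity of $T$. My idea is to invoke \eqref{condend1} with exactly one of the three arguments frozen at $x^*$ and the other two running along the sequence. Taking $x=x^*$, $y=z=x_m$ gives on the right
\[
A_k = G(T^{2k-2}x^*,T^{2k-1}x^*,T^{2k-1}x^*)\, G^k(x_{m+2k-1},x_{m+2k},x_{m+2k})\, G^k(x_{m+2k},x_{m+2k+1},x_{m+2k+1}).
\]
The first factor is constant in $m$, while the last two involve consecutive iterates and therefore tend to $0$ as $m\to\infty$; hence the whole product tends to $0$. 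Since $G$ is continuous (Theorem \ref{theorem1}) and $x_{m+2k},x_{m+2k+1}\to x^*$, the left-hand side $G^{2k+1}(T^{2k-1}x^*,x_{m+2k},x_{m+2k+1})$ tends to $G^{2k+1}(T^{2k-1}x^*,x^*,x^*)$, which must vanish, giving $T^{2k-1}x^*=x^*$. Freezing instead the middle slot ($y=x^*$, $x=z=x_m$) the same reasoning yields $T^{2k}x^*=x^*$.

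Combining the two relations gives $Tx^* = T\bigl(T^{2k-1}x^*\bigr) = T^{2k}x^* = x^*$, so $x^*$ is a fixed point of $T$. For uniqueness I would feed two fixed points $x^*$ and $w$ into \eqref{condend1} through $x=x^*$, $y=z=w$: since every power of $T$ fixes both points, the first factor of $A_k$ collapses to $G(x^*,x^*,x^*)=0$ by (G1), forcing $G^{2k+1}(x^*,w,w)=0$ and hence $x^*=w$.

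The delicate point throughout is the second paragraph: one must verify that freezing a single slot really leaves a bounded (constant) factor multiplied by factors that genuinely vanish, so that the product tends to $0$ irrespective of the possibly nonzero constant, and that $G$-continuity legitimately transfers the resulting limit to the left-hand side. This is precisely where the three-variable formulation of \eqref{condend1} does the work that continuity did in Theorem \ref{end}, and it explains why the hypothesis on $T$ can be weakened here while the conclusion is simultaneously upgraded to uniqueness.
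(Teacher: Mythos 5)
The paper states Theorem \ref{end1} without any proof at all, so there is nothing to compare your argument against line by line; what you have written is in effect the missing proof, and it is correct. Your first paragraph reproduces, almost verbatim, the Cauchy-sequence construction from the paper's proof of Theorem \ref{end} (substituting $x=y=z=x_l$ with $l=n-2k+1$, bounding the factors of $A_k$ via (G3), and cancelling $G^{2k}(x_n,x_{n+1},x_{n+2})$), which is clearly what the authors intended. The genuinely new content is your second paragraph: since Theorem \ref{end1} drops the $G$-continuity of $T$ that Theorem \ref{end} relied on, you need another route to $Tx^*=x^*$, and freezing one slot of \eqref{condend1} at $x^*$ while the other two run along the orbit does the job --- the frozen factor of $A_k$ is a constant, the remaining factors are $G$-distances between consecutive iterates and vanish, and $G$-continuity of $G$ itself (Theorem \ref{theorem1}) passes the limit to the left-hand side, giving $T^{2k-1}x^*=x^*$ and $T^{2k}x^*=x^*$, hence $Tx^*=T^{2k}x^*=x^*$. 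The uniqueness step via (G1) is also sound. Two minor caveats you inherit from the paper's own conventions rather than introduce yourself: the blanket assumption that all iterates are pairwise distinct (not merely consecutive ones) is what licenses the applications of (G3) and the division by $G^{2k}(x_n,x_{n+1},x_{n+2})$, and the contraction $G(x_n,x_{n+1},x_{n+2})\le q\,G(x_{n-1},x_n,x_{n+1})$ only becomes available for $n\ge 2k-1$, so the geometric bound should read $q^{\,n-(2k-1)}$ times a fixed constant; neither affects the conclusion.
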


\bibliographystyle{amsplain}

\end{document}